\documentclass[12pt]{article}
\usepackage{amsmath,amssymb,amsthm}

\usepackage{graphicx}
\usepackage{subfigure}

\oddsidemargin 0in
\textwidth 6.5in
\textheight 9in
\topmargin 0in
\headheight 0in
\headsep 0in

\newtheorem{theorem}{Theorem}[section]
\newtheorem{lemma}[theorem]{Lemma}

\newtheorem{corollary}[theorem]{Corollary}

\theoremstyle{definition}
\newtheorem{definition}[theorem]{Definition}
\numberwithin{equation}{section}
\numberwithin{figure}{section}
\newcommand{\Var}{\operatorname{Var}}

\newcommand{\Area}{\operatorname{Area}}

\begin{document}

\title{Central limit theorems for uniform model random polygons}

\author{John Pardon}

\date{2 June 2010; Revised 13 November 2010}

\maketitle

\begin{abstract}
We show how a central limit theorem for Poisson model random polygons implies a central limit theorem 
for uniform model random polygons.  To prove this implication, it suffices to show that in the two models, 
the variables in question have asymptotically the same expectation and variance.  We use integral geometric 
expressions for these expectations and variances to reduce the desired estimates to the convergence $(1+\frac\alpha n)^n\to e^\alpha$ as $n\to\infty$.
\end{abstract}

\section{Introduction}

Given a convex set $K\subset\mathbb R^2$ of unit area, we may define two random polygon models.  For the \emph{Poisson model}, 
we consider a Poisson process of intensity $\lambda$ inside $K$, and define $\Pi_{K,\lambda}$ to be the convex hull of the 
points of this process.  For the \emph{uniform model}, we take $n$ independent random points distributed uniformly in $K$ and 
let $P_{K,n}$ be their convex hull.  For a polygon $\mathcal P$ inside $K$, we let $N(\mathcal P)$ denote the number of vertices 
of $\mathcal P$, and we let $A(\mathcal P)$ denote the area of $K\setminus\mathcal P$.  When one wants to prove central limit 
theorems for $N$ and $A$ in either of the two models of random polygons, it is often the case that 
the Poisson model is easier than the uniform model.  Indeed, in general, results are proved first for the Poisson model, and 
then more arguments are needed to deduce a corresponding result for the uniform model.

Recently, the author \cite{pardonaop} studied the Poisson model of random polygons and proved the following central limit theorem for $N$ and $A$:

\begin{theorem}[\cite{pardonaop}]\label{strongclt}
As $\lambda\to\infty$, the following estimates for $\Pi_{K,\lambda}$ hold uniformly over all $K$ of unit area:
\begin{equation}
\sup_x\left|P\left(\frac{N-\mathbb E[N]}{\sqrt{\Var N}}\leq x\right)-\Phi(x)\right|\ll\frac{\log^2\mathbb E[N]}{\sqrt{\mathbb E[N]}}
\end{equation}
\begin{equation}
\sup_x\left|P\left(\frac{A-\mathbb E[A]}{\sqrt{\Var A}}\leq x\right)-\Phi(x)\right|\ll\frac{\log^2\mathbb E[N]}{\sqrt{\mathbb E[N]}}
\end{equation}
Here $\Phi(x)=P(Z\leq x)$ where $Z$ is the standard normal distribution.
\end{theorem}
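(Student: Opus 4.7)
The plan is to prove Theorem \ref{strongclt} by expressing both $N$ and $A$ as sums of local contributions from the points of the Poisson process and then invoking a quantitative central limit theorem for stabilizing functionals. For $N$, write $N=\sum_{p\in\Pi_{K,\lambda}}\xi(p,\Pi_{K,\lambda})$ where $\xi(p,\Pi_{K,\lambda})$ is the indicator that $p$ is an extreme point of the convex hull; the key structural fact is that this indicator is determined by the process restricted to a (random) neighborhood of $p$, the so-called radius of stabilization $R(p)$. For $A$, one decomposes $K\setminus\Pi_{K,\lambda}$ into local ``caps'' cut off by consecutive hull edges, e.g.\ via the Bárány--Larman economic cap-covering scheme, so that $A$ also becomes a sum of locally determined scores.

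First I would establish the appropriate stabilization estimate: the probability that $R(p)$ exceeds $r$ decays superpolynomially in $r$ measured against the natural scale set by the local boundary geometry of $K$. Here one uses the classical fact that to determine whether $p$ is extreme, one only needs to examine the Poisson process in the ``visibility region'' from $p$ towards $\partial K$, whose expected diameter contracts rapidly as $\lambda\to\infty$. This should be done uniformly over convex $K$ of unit area by normalizing via the affine covariance of the problem (using \SL{} or \Aff{} changes of variable to straighten the local boundary).

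Next I would feed these local contributions and their stabilization bounds into a Stein's method machine — for instance the Malliavin--Stein framework for Poisson functionals, or a dependency graph argument in the spirit of Reitzner and Baldi--Rinott — to obtain a Kolmogorov bound of the form $\sup_x|P((T-\mathbb{E} T)/\sqrt{\Var T}\leq x)-\Phi(x)|\ll (\log\mathbb{E}[N])^c/\sqrt{\Var T}$ for $T\in\{N,A\}$. Since prior work on random polygons gives $\Var N\asymp \mathbb{E}[N]$ (and a comparable bound for $\Var A$ after suitable normalization), this translates into the stated rate $\log^2\mathbb{E}[N]/\sqrt{\mathbb{E}[N]}$.

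The main obstacle I expect is making everything uniform over all convex $K$ of unit area. The scale at which stabilization happens is sensitive to how flat or how pointy $\partial K$ is near a given boundary point, so one must carefully parameterize by caps of $K$ (Macbeath regions) and sum the local contributions with bounds that do not degenerate for long thin $K$ or for $K$ with corners. Secondarily, handling $A$ is harder than $N$ because the area deficit is not naturally indexed by Poisson points; getting the same log power in the rate for $A$ will probably require a careful truncation argument to control rare but large caps.
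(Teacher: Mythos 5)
Your proposal has a genuine gap, and it sits exactly where you yourself flag it: uniformity over all convex $K$ of unit area. The stabilization/Malliavin--Stein (or dependency-graph) machinery you describe is essentially the Reitzner--Vu route, and it is known to work when $\partial K$ is $C^2$ with nonvanishing curvature (or, with substantial extra work, when $K$ is a polytope, as in B\'ar\'any--Reitzner). For arbitrary $K$ the tail bounds on the radius of stabilization and the scale of the Macbeath/cap regions degenerate in a $K$-dependent way: near a corner, a flat boundary arc, or for a long thin body, the ``natural scale set by the local boundary geometry'' is not controlled by any absolute constant, and an $\SL_2$ or affine normalization can only straighten the picture near one boundary point at a time, not simultaneously along all of $\partial K$. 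Saying one must ``carefully parameterize by caps'' names the obstacle but supplies no mechanism for overcoming it, and overcoming it is the entire content of the theorem beyond the previously known smooth and polytopal cases; in particular the asserted superpolynomial stabilization tail, uniform in $K$, is not established and is not a classical fact. A secondary issue is that your decomposition of $A$ by a cap covering indexed near Poisson points leaves the rare large caps to an unspecified truncation, which is again where uniformity in $K$ would have to be fought for.

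For comparison, the proof in \cite{pardonaop} (only sketched in Section \ref{notdefsec} of this paper, since the theorem is quoted, not reproved here) takes a different localization: both $N$ and $A$ are decomposed into angular increments $X(\alpha_i,\alpha_{i+1})$ where the partition points are chosen so that each interval has equal \emph{affine invariant measure}. Affine invariance is precisely what makes the resulting blocks have comparable expectation and variance uniformly in $K$, and the dependence between blocks is controlled by an $\alpha$-mixing estimate with exponential decay in $|i-j|$. A central limit theorem for mixing triangular arrays, combined with the B\'ar\'any--Reitzner lower bound $\Var X\gg\mathbb E[N]$ (valid for all convex bodies), then yields the stated Berry--Esseen-type rate $\log^2\mathbb E[N]/\sqrt{\mathbb E[N]}$. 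In short, where you propose to localize in space around Poisson points and control dependence by stabilization radii, the actual argument localizes in the angle variable with an affine-invariant partition and controls dependence by mixing; the latter choice is what buys uniformity over all $K$, which your plan leaves unresolved.
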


In this paper, our goal is to show how to derive the following corollary for the uniform model:

\begin{corollary}\label{uniformclt}
As $n\to\infty$, the following estimates for $P_{K,n}$ hold uniformly over all $K$ of unit area:
\begin{equation}
\sup_x\left|P\left(\frac{N-\mathbb E[N]}{\sqrt{\Var N}}\leq x\right)-\Phi(x)\right|\to 0
\end{equation}
\begin{equation}
\sup_x\left|P\left(\frac{A-\mathbb E[A]}{\sqrt{\Var A}}\leq x\right)-\Phi(x)\right|\to 0
\end{equation}
Here $\Phi(x)=P(Z\leq x)$ where $Z$ is the standard normal distribution.
\end{corollary}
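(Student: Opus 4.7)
The plan is to derive the corollary from Theorem~\ref{strongclt} by de-Poissonization. The link between the two models is the standard coupling: generate a Poisson process of intensity $n$ on $K$ and let $M\sim\operatorname{Poisson}(n)$ be its cardinality; conditional on $M=m$ the points are i.i.d.\ uniform, so $\Pi_{K,n}$ has the same distribution as $P_{K,M}$. Because $M$ concentrates at $n$ with Gaussian-like fluctuations of order $\sqrt n$, this should propagate the CLT from $\Pi_{K,n}$ to $P_{K,n}$, provided the first two moments of $N$ and $A$ in the two models agree sufficiently well.

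The main computational step is the moment matching. Efron-type integral-geometric identities express $\mathbb E[N(P_{K,n})]$, $\Var N(P_{K,n})$, and the corresponding quantities for $A$, as integrals over $K$ (for the means) or $K\times K$ (for the variances) of functions involving $(1-s)^n$ for explicit area functions $s=s(z)$ or $s(z,w)$. The corresponding Poisson quantities are given by the same integrals with $(1-s)^n$ replaced by $e^{-ns}$. The key estimate
\begin{equation*}
\bigl(1-\tfrac\alpha n\bigr)^n=e^{-\alpha}\bigl(1+O(\alpha^2/n)\bigr),\qquad 0\leq\alpha\leq n,
\end{equation*}
applied on the essential range $\alpha=O(\log n)$ (outside of which both integrands are polynomially small in $n$), reduces the comparison to the convergence $(1+\alpha/n)^n\to e^\alpha$ flagged in the abstract; it yields
\begin{equation*}
\mathbb E[N(P_{K,n})]-\mathbb E[N(\Pi_{K,n})]=o\bigl(\sqrt{\Var N(\Pi_{K,n})}\bigr),\qquad \Var N(P_{K,n})=(1+o(1))\Var N(\Pi_{K,n}),
\end{equation*}
and analogous statements for $A$, uniformly in $K$ of unit area.

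Once the moments agree, the CLT transfers through the coupling. The law of total variance applied to $N(\Pi_{K,n})=N(P_{K,M})$ gives $\Var N(\Pi_{K,n})=\mathbb E[\Var(N(P_{K,M})\mid M)]+\Var(\mathbb E[N(P_{K,M})\mid M])$; matching the left-hand side to $\Var N(P_{K,n})$ forces the second term, essentially $n\bigl(\tfrac{d}{dm}\mathbb E N(P_{K,m})\bigr)^2$, to be $o(\Var N(P_{K,n}))$. Coupling $P_{K,M}$ and $P_{K,n}$ by using the first $\min(n,M)$ points of the Poisson process (padded by independent uniform points if $M<n$) and bounding the effect of the $|M-n|$ point swaps via an Efron--Stein-style argument then gives $N(P_{K,M})-N(P_{K,n})=o_P(\sqrt{\Var N(P_{K,n})})$, and Theorem~\ref{strongclt} applied at $\lambda=n$ finishes the proof.

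The chief obstacle will be uniformity in $K$: $\mathbb E N$ ranges from $\Theta(\log n)$ for polygons up through $\Theta(n^{1/3})$ for smooth bodies and can be larger for extremal shapes, and the error terms in the moment comparison must be negligible compared to $\sqrt{\Var N}$ on all of these scales. The redeeming feature is that the bound $(1-\alpha/n)^n=e^{-\alpha}(1+O(\alpha^2/n))$ is entirely shape-independent, so once the integral-geometric identities are set up the remaining work is essentially uniform bookkeeping on the integrals.
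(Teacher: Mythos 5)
Your overall architecture (match the first two moments of $N$ and $A$ in the two models, then transfer the CLT from $\Pi_{K,n}$ to $P_{K,n}$) is the same as the paper's, and your moment-matching step is essentially Lemmas \ref{expectsame} and \ref{varsame}: integral-geometric formulas in which the only difference between the models is $(1-s/n)^n$ versus $e^{-s}$, compared on the range $s=O(\log n)$ after a cutoff, with the resulting relative error $O(n^{-1+\epsilon})$ beaten by $\sqrt{\Var}$ via the B\'ar\'any--Reitzner bound $\Var X(\Pi)\gg\mathbb E[X(\Pi)]$ together with $\mathbb E[N]\ll n^{1/3}$. You assert this step rather than prove it, and the ``chief obstacle'' of uniformity you flag at the end is exactly what those two bounds resolve; but the real gap is in your transfer step, which is where your route departs from the paper and where, as written, it does not go through.

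Two problems. First, the law-of-total-variance inference is circular: from $\Var X(\Pi_{K,n})=\mathbb E[\Var(X(P_{K,M})\mid M)]+\Var(\mathbb E[X(P_{K,M})\mid M])$ and $\Var X(\Pi_{K,n})\sim\Var X(P_{K,n})$ you cannot conclude the second term is $o(\Var)$ unless you already know $\mathbb E[\Var(X(P_{K,M})\mid M)]\geq(1-o(1))\Var X(P_{K,n})$, i.e.\ that $\Var X(P_{K,m})$ varies slowly in $m$ near $n$ uniformly in $K$ --- which is not available (Theorem \ref{strongvarest} gives only $\asymp$, not slow variation in $\lambda$). Second, the ``Efron--Stein-style'' bound on the effect of the $|M-n|\approx\sqrt n$ extra points must produce $o_P(\sqrt{\Var X(P_{K,n})})$, and uniformly over $K$ the target can be as small as order $\sqrt{\log n}$ (polygons); any per-swap bound summed over $\sqrt n$ swaps is hopeless, so the argument has to show that a typical extra point does not touch the hull at all. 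That fix does exist in the plane: conditioning on $|M-n|\leq C\sqrt{n\log n}$, the expected number of extra points landing outside the smaller hull is at most $C\sqrt{n\log n}\,\mathbb E[A(P_{K,n})]\ll\sqrt{n\log n}\,n^{-2/3}\to0$ (Efron's identity plus $\mathbb E[N]\ll n^{1/3}$), so in fact $X(P_{K,M})=X(P_{K,n})$ with probability tending to $1$, which both supersedes your first step and yields the needed closeness of distributions --- but this computation, not the tools you name, is the missing content. The paper avoids the coupling issue entirely: Lemma \ref{punifsame} conditions both models on the number of points in the union $S_\epsilon$ of all caps of area $\epsilon$ (given that count, and on the event that the hull boundary lies in $S_\epsilon$, the two conditional probabilities coincide) and then bounds the discrepancy by the Poisson--binomial total variation distance $\leq 2\Area(S_\epsilon)$ (Vervaat), a soft argument requiring no control of how the hull reacts to added or deleted points; the moment lemmas are then used only to swap the normalizing constants inside the Gaussian limit.
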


From the estimates derived in this paper, a secondary result from \cite{pardonaop} (Theorem \ref{strongvarest} below) also carries over immediately 
to the uniform model.  We should say that Theorem \ref{strongvarest} and the consequence derived here, 
Corollary \ref{uniformvarest}, have both been proven independently by Imre B\'ar\'any and Matthias Reitzner.

\begin{theorem}[\cite{pardonaop}]\label{strongvarest}
As $\lambda\to\infty$, the following estimates for $\Pi_{K,\lambda}$ hold uniformly over all $K$ of unit area:
\begin{equation}
\mathbb E[N]\asymp\Var N\asymp\lambda\mathbb E[A]\asymp\lambda^2\Var A
\end{equation}
\end{theorem}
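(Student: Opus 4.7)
The chain contains four quantities linked by three $\asymp$ relations, which I would address separately. The central link $\mathbb E[N]\asymp\lambda\mathbb E[A]$ is in fact an exact equality, the Poisson analogue of Efron's identity: by the Slivnyak--Mecke formula a point $x$ of the process is a vertex of $\Pi_{K,\lambda}$ if and only if removing it leaves $x$ outside the hull of the remaining points, and the Palm distribution of a Poisson process at $x$ is again Poisson of intensity $\lambda$, so $\mathbb E[N]=\lambda\int_K P(x\notin\Pi_{K,\lambda})\,dx=\lambda\mathbb E[A]$. This consumes one of the three links for free.

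For $\Var N\asymp\mathbb E[N]$, the upper bound is amenable to the Poincar\'e inequality for Poisson functionals, $\Var N\leq\lambda\int_K\mathbb E[(D_xN)^2]\,dx$, where $D_xN$ is the change in vertex count when a point is added at $x$. The quantity $|D_xN|$ is controlled by the number of existing vertices of $\Pi_{K,\lambda}$ lying in the visibility cap cut off by $x$, and standard cap-covering yields $\int_K\mathbb E[(D_xN)^2]\,dx\lesssim\int_K P(x\notin\Pi_{K,\lambda})\,dx$. For the parallel estimate $\Var A\asymp\mathbb E[A]/\lambda$, the starting point is
\begin{equation*}
\Var A=\int_{K\times K}\bigl[P(x,y\notin\Pi_{K,\lambda})-P(x\notin\Pi_{K,\lambda})P(y\notin\Pi_{K,\lambda})\bigr]\,dx\,dy,
\end{equation*}
and one shows that the integrand is supported on pairs lying in a common boundary cap of scale $\asymp\lambda^{-1}$ and of order $P(x\notin\Pi_{K,\lambda})$ there; both directions of $\asymp$ then follow by cap-counting.

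The main obstacle will be the matching lower bounds, most notably $\Var N\gtrsim\mathbb E[N]$. The Poincar\'e inequality is one-sided, so ruling out destructive correlation between vertex indicators requires exhibiting many approximately independent local contributions. The natural plan is to partition a boundary neighborhood of $K$ into $\asymp\mathbb E[N]$ disjoint Macbeath-type caps, show that the restricted vertex count in each has $\Omega(1)$ variance, and bound cross-covariances between well-separated caps using the fact that they depend on disjoint portions of the Poisson process. Carrying out this decomposition uniformly in $K$---including shapes with long straight edges, flat spots, or highly variable curvature---is the technical heart of the argument, and is where I would expect to spend essentially all of the work.
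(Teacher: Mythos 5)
First, a point of orientation: this statement is not proved in the present paper at all. It is Theorem \ref{strongvarest}, quoted from \cite{pardonaop}, and the only ingredient of it that this paper actually invokes explicitly is the lower bound $\Var X(\Pi)\gg\mathbb E[X(\Pi)]$, which it attributes to B\'ar\'any and Reitzner \cite{baranypreprint}; the rest of the chain is established in \cite{pardonaop} by the localization/mixing machinery sketched at the end of Section \ref{notdefsec} (decomposition of $N$ and $A$ into pieces $X(\alpha_i,\alpha_{i+1})$ of constant affine-invariant measure with comparable expectation and variance and exponentially decaying correlations). Several pieces of your plan are sound: the Mecke--Slivnyak identity does give the exact equality $\mathbb E[N]=\lambda\mathbb E[A]$ in the Poisson model, and your instinct that the variance lower bound is the heart of the matter, to be attacked via a partition into $\asymp\mathbb E[N]$ disjoint Macbeath-type caps each contributing $\Omega(1)$ variance, is precisely the route taken in the literature (it is the content of the B\'ar\'any--Reitzner result that this paper cites).

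The difficulty is that, as written, your proposal is a program rather than a proof, and the parts left unexecuted are exactly the hard, uniform-in-$K$ parts. For the upper bound $\Var N\ll\mathbb E[N]$ you assert that the Poincar\'e inequality reduces matters to $\int_K\mathbb E[(D_xN)^2]\,dx\ll\int_K P(x\notin\Pi_{K,\lambda})\,dx$, but the needed second-moment bound $\mathbb E[(D_xN)^2]\ll P(x\notin\Pi_{K,\lambda})$ is not automatic: $D_xN$ can be as large as the number of vertices swallowed by the cap cut off by $x$, and controlling its second moment uniformly over arbitrary convex $K$ (long flat edges, no curvature assumptions) is where the cap-covering/Macbeath technology has to be deployed in earnest, not merely invoked; similarly for your claim that the covariance integrand for $A$ is ``supported'' on pairs in a common cap of area $\asymp\lambda^{-1}$, which is only true up to decay estimates that must be proved. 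Most importantly, the matching lower bounds --- in particular $\Var N\gg\mathbb E[N]$ and $\Var A\gg\lambda^{-2}\mathbb E[N]$ uniformly over all $K$ of unit area --- are acknowledged in your last paragraph as ``essentially all of the work'' and are not carried out; showing that each cap contributes $\Omega(1)$ variance and that cross-covariances do not cancel these contributions is a genuine theorem (B\'ar\'any--Reitzner), not a routine verification. So the proposal identifies the correct skeleton but leaves the decisive estimates unproven, and as a self-contained argument for Theorem \ref{strongvarest} it has a real gap.
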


\begin{corollary}\label{uniformvarest}
As $n\to\infty$, the following estimates for $P_{K,n}$ hold uniformly over all $K$ of unit area:
\begin{equation}
\mathbb E[N]\asymp\Var N\asymp n\mathbb E[A]\asymp n^2\Var A
\end{equation}
\end{corollary}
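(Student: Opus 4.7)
The strategy, as foreshadowed in the abstract, is to show that for $\lambda = n$ each of the four quantities $\mathbb E[N]$, $\Var N$, $\mathbb E[A]$, $\Var A$ agrees up to bounded multiplicative constants between the Poisson model $\Pi_{K,\lambda}$ and the uniform model $P_{K,n}$, uniformly in $K$ of unit area. Once this comparison is established, Corollary \ref{uniformvarest} follows immediately from Theorem \ref{strongvarest} by substituting $\lambda = n$.

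The first step is to write down integral geometric expressions for each of the four quantities in both models. Schematically these take the form $\int f(S)\, w(S)\, dS$ for expectations and $\int\int g(S_1,S_2)\, w(S_1 \cup S_2)\, dS_1\, dS_2$ for variances and covariances, where $S$ ranges over configurations of a bounded number of points in $K$, $|S|$ denotes an associated area (typically the area of the cap spanned by the points), and the weight $w$ depends only on $|S|$. In the Poisson model with intensity $\lambda$, one has $w(S) = e^{-\lambda |S|}$; in the uniform model with $n$ points, $w(S) = (1 - |S|)^{n - k}$ for a bounded $k$ counting the points defining $S$.

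The crux is a careful comparison of $(1-t)^n$ with $e^{-nt}$ for $t \in [0,1]$, invoking (in quantitative form) the convergence $(1+\alpha/n)^n \to e^\alpha$. For $t = O(n^{-1/2})$, the Taylor expansion $\log(1-t) = -t + O(t^2)$ yields $(1-t)^n = e^{-nt}\cdot e^{O(1)}$, which gives two-sided control. For larger $t$, one retains the one-sided inequality $(1-t)^n \leq e^{-nt}$; this is enough because on the Poisson side the weight $e^{-\lambda |S|}$ already suppresses the contribution of caps with $|S| \gg \log n / n$ to a negligible level, so the discrepancy in the two weights on the tail is harmless.

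I expect the main obstacle to be proving the ``negligible tail'' claim with the required uniformity in $K$: one must rule out shapes of $K$ for which an atypically large fraction of the integral is carried by caps of large area. My plan here is to extract the corresponding tail estimates from the arguments of \cite{pardonaop}, inserting a coarse crossover at $|S| \asymp \log n / n$ and bounding the small-$|S|$ and large-$|S|$ regimes separately via the two estimates above. Carried out for each of $\mathbb E[N]$, $\Var N$, $\mathbb E[A]$, $\Var A$ in turn, this yields the bounded-ratio comparison between the two models and hence the corollary.
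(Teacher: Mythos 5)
Your overall reduction is the paper's: compare each of the four quantities between $\Pi_{K,n}$ and $P_{K,n}$ uniformly in $K$ and then quote Theorem \ref{strongvarest} with $\lambda=n$; and for the two \emph{expectations} your weight comparison of $(1-t)^{n-k}$ against $e^{-nt}$, with a crossover at $t\asymp\log n/n$, the one-sided bound $(1-t)^n\le e^{-nt}$ on the tail, and a uniform-in-$K$ tail estimate (which the paper extracts from Lemma \ref{boundonF} rather than from \cite{pardonaop} at large) is essentially the argument of Lemma \ref{expectsame}. The genuine gap is in the variances. The double integral you write down, with weight $w(S_1\cup S_2)$, is a formula for the second moment $\mathbb E[X^2]$ (or for the product-moment term), not for $\Var X$: a variance or covariance integrand involves differences of the type $w(S_1\cup S_2)-w(S_1)w(S_2)$, which are not sign-definite, and in the uniform model there are additional global negative correlations coming from the fixed number of points. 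So there is no positive-integrand expression of the shape you describe to which a pointwise bounded-ratio comparison of weights could be applied, and ``agreement up to bounded multiplicative constants'' cannot be established for $\Var X$ integrand-by-integrand.

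Nor does a bounded-ratio comparison of $\mathbb E[X^2]$ (which your estimates do give) imply one for $\Var X$, because of massive cancellation: uniformly one has $\Var X(\Pi)\asymp\mathbb E[X(\Pi)]$ while $\mathbb E[X(\Pi)^2]\ge\mathbb E[X(\Pi)]^2$, e.g.\ for smooth $K$ one has $\Var N\asymp n^{1/3}$ against $\mathbb E[N^2]\asymp n^{2/3}$, so a constant-factor error in the second moment is of order $\mathbb E[X]^2$ and swamps the variance entirely. What is needed is a \emph{relative} error $o(\Var X/\mathbb E[X]^2)$. The paper achieves $O(n^{-1+\epsilon})$ by working only on caps of area $O(\log n/n)$, where $(1-t)^n=e^{-nt}(1+O((\log n)^2/n))$ (your $t=O(n^{-1/2})$ window only yields two-sided control up to a factor $e^{O(1)}$, which is useless against this cancellation), together with the power-saving tail bound from Lemma \ref{boundonF}; and it then absorbs the resulting error $n^{-1+\epsilon}\mathbb E[X(\Pi)]^2$ into $o(\Var X(\Pi))$ using two external inputs absent from your plan: the B\'ar\'any--Reitzner lower bound $\Var X(\Pi)\gg\mathbb E[X(\Pi)]$ and the classical upper bound $\mathbb E[X(\Pi)]\ll n^{1/3}$. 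Without a quantitative error rate and these two bounds (i.e.\ without the content of Lemma \ref{varsame}), the $\Var N$ and $n^2\Var A$ parts of the corollary do not follow from your comparison.
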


Though both Theorem \ref{strongclt} and Corollary \ref{uniformclt} have been known for quite some time in the 
case that either $K$ is a polygon or $\partial K$ is of class $C^2$, the proof given here of Theorem 
\ref{strongclt} $\implies$ Corollary \ref{uniformclt} for \emph{arbitrary} convex $K$ appears to be new.  
Corollary \ref{uniformclt} answers a question of Van Vu \cite{vuopen}.

We will see below that in order to prove that Theorem \ref{strongclt} implies Corollary \ref{uniformclt}, 
it suffices to show that when $n=\lambda$, the random variables $N(P_{K,n})$ and $N(\Pi_{K,\lambda})$ (as well as $A(P_{K,n})$ and $A(\Pi_{K,\lambda})$) 
have the same expectation and variance up to a small enough error.  This is essentially the same strategy 
used by Van Vu in \cite{vu1} to derive a similar implication for a special case of random polytopes.

However, in contrast to \cite{vu1}, we will use relatively down to earth integral geometry to establish our estimates, 
instead of sophisticated arguments from probability theory.  The approach we take is conceptually very simple.  We write down 
integral geometric expressions for the expectation and variance for the Poisson and uniform models, and then estimate their 
difference in the limit $n=\lambda\to\infty$.  In this formulation, the ``reason'' that the desired convergence holds is completely 
transparent: it is essentially reduced to the convergence $(1+\frac\alpha n)^n\to e^\alpha$ as $n\to\infty$.  Also, the variables 
$N$ and $A$ are treated simultaneous with an identical proof for each (c.f. \cite{vu1} where the case of $f_i$, the number of $i$-simplices, 
is harder than the case of the volume and requires a new idea).  An admitted disadvantage 
of this approach is that one has to actually write down these integrals explicitly, however once this is done, no 
further manipulations are necessary.  It is interesting to observe that using integral geometry is almost never the ``right'' way 
to prove statements along the lines of Theorem \ref{strongclt} or even Theorem \ref{strongvarest}, essentially because the expressions quickly become too complicated 
to deal with either conceptually or theoretically.  However for our applications here, the desired estimates become simple 
when written in terms of the integral geometry, so we in fact believe that these integral geometric expressions do in some sense 
give the ``right'' proof of our main lemmas.

One expects that our results and the proofs given here will admit straightfoward generalization to higher dimensions.  For random polytopes in dimension $d\geq 3$, 
Theorems \ref{strongclt}--\ref{uniformvarest} are all known in the case of fixed $K$ whose boundary is $C^2$ and has nonvanishing Gauss curvature, 
due to Reitzner \cite{reitzner1} and Vu \cite{vu1}.  In the case that $K$ is a polytope, the analogue of Theorem \ref{strongclt} was proven very 
recently by B\'ar\'any and Reitzner \cite{baranypolytope} (one expects that an analogue of Theorem \ref{strongvarest} also follows from their methods).  
We expect that if applied to higher dimensions, the methods in this paper would show that Corollaries \ref{uniformclt} and \ref{uniformvarest} follow 
in any situation in which Theorems \ref{strongclt} and \ref{strongvarest} respectively are known to hold (in particular for the case that $K$ is a polytope).  
It is conjectured that Theorems \ref{strongclt}--\ref{uniformvarest} hold for $d\geq 3$ with no restriction on $K$.

\subsection{Acknowledgement}

We thank the referee for useful comments and in particular for making us realize an error in the original proof of Lemma \ref{punifsame}.

\section{Notation and definitions}\label{notdefsec}

We now review some definitions and two basic lemmas from \cite{pardonaop}.

In this paper, $K$ will always denote a (bounded) convex set in $\mathbb R^2$.  Any constants 
implied by the symbols $\ll$, $\gg$, or $\asymp$ are absolute; in particular they are not allowed to depend on $K$.

\begin{figure}[htb]
\centering
\subfigure[Illustration of $W(\theta)$]{\scalebox{.8}{\includegraphics{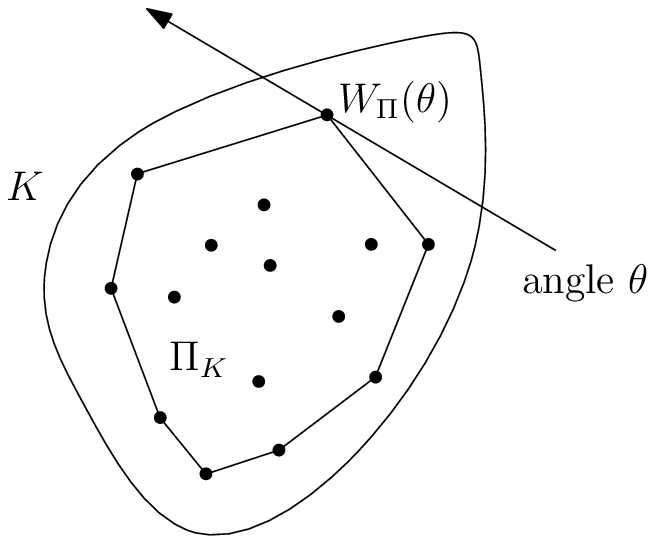}}}
\hspace{0.1in}
\subfigure[Illustration of $C_K(p,\theta)$]{\scalebox{.8}{\includegraphics{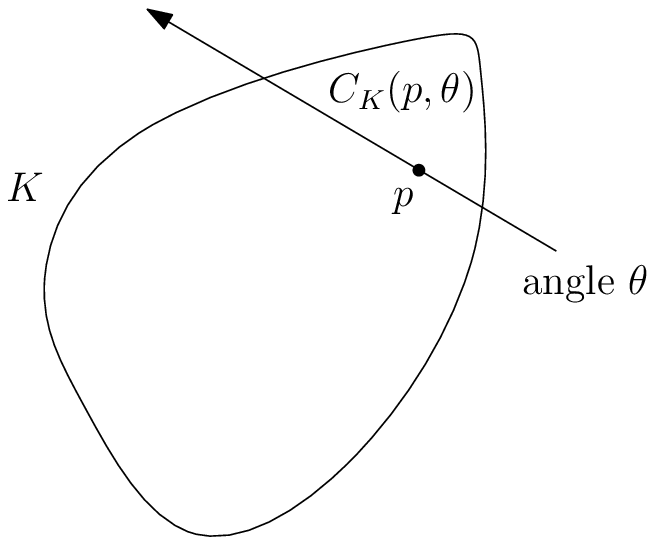}}}
\caption{Illustration of some definitions.}\label{basic}
\end{figure}

Many of the following definitions are 
illustrated in Figure \ref{basic}.  We may leave out the subscript $K$ later when doing so is unambiguous.

\begin{definition}
We define the random variable $W_{\mathcal P}(\theta)$ to be the vertex of $\mathcal P$ which has an oriented tangent line at angle $\theta$.  
This is illustrated in Figure \ref{basic}(a).
\end{definition}

\begin{definition}
A \emph{cap at angle $\theta$} is the intersection of $K$ with a halfplane $H_\theta$ at angle $\theta$.  We 
may specify a cap at angle $\theta$ by giving either its area $r$ or a point $p\in\partial H_\theta$.  
These are denoted $C_K(r,\theta)$ and $C_K(p,\theta)$ respectively; the latter is illustrated in Figure \ref{basic}(b).
\end{definition}

\begin{definition}
We define the real number $A_K(p,\theta)$ to be the area of the cap $C_K(p,\theta)$.
\end{definition}

\begin{lemma}\label{Wdistr}
The random variable $W_{\Pi,\lambda}(\theta)$ has probability distribution given by $\lambda\exp(-\lambda A_K(p,\theta))\,dp$ where $dp$ is the Lebesgue measure.  This 
has total mass $1-e^{-\lambda\operatorname{Area}(K)}$, as $\Pi_{K,\lambda}$ is empty with probability $e^{-\lambda\operatorname{Area}(K)}$.
\end{lemma}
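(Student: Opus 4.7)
The plan is to identify the event $\{W_{\Pi,\lambda}(\theta)=p\}$ directly in terms of the underlying Poisson process $\Phi$ of intensity $\lambda$ on $K$, and then compute the density via the Slivnyak--Mecke formula together with the void probabilities of the Poisson process.

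The first step is the geometric characterization: for $p\in K$, the point $p$ is the vertex of $\Pi_{K,\lambda}$ supporting an oriented tangent at angle $\theta$ if and only if (i) $p\in\Phi$, and (ii) no other point of $\Phi$ lies in the open cap $C_K(p,\theta)$. Condition (ii) is precisely what forces the line through $p$ at angle $\theta$ to be a supporting line of $\Pi_{K,\lambda}$ on the appropriate side. Almost surely $\Phi$ contains no three collinear points, so $W_{\Pi,\lambda}(\theta)$ is uniquely defined whenever $\Phi\neq\emptyset$.

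The second step computes the density. Applying the Slivnyak--Mecke formula to $\Phi$ with test function $g(p,\Phi)=\mathbf{1}[p\in B]\,\mathbf{1}[\Phi\cap C_K(p,\theta)=\emptyset]$ for a Borel set $B\subseteq K$ yields
\[
P\bigl(W_{\Pi,\lambda}(\theta)\in B\bigr)=\lambda\int_B P\bigl(\Phi\cap C_K(p,\theta)=\emptyset\bigr)\,dp=\lambda\int_B e^{-\lambda A_K(p,\theta)}\,dp,
\]
using the standard fact that a Poisson process of intensity $\lambda$ has no points in a region of area $a$ with probability $e^{-\lambda a}$. This is the claimed density.

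The third step is the total mass. Choose coordinates so that the direction $\theta$ is horizontal, and let $F(y)=\Area(K\cap\{y'\geq y\})$, so that $A_K(p,\theta)=F(y_p)$ and $F'(y)=-w(y)$, where $w(y)$ is the width of $K$ at height $y$. Fubini then gives
\[
\lambda\int_K e^{-\lambda A_K(p,\theta)}\,dp=\lambda\int_{y_{\min}}^{y_{\max}} w(y)\,e^{-\lambda F(y)}\,dy=\bigl[-e^{-\lambda F(y)}\bigr]_{y_{\min}}^{y_{\max}}=1-e^{-\lambda\Area(K)},
\]
which matches $P(\Phi\neq\emptyset)$, consistent with $W_{\Pi,\lambda}(\theta)$ being defined precisely on this event. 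I do not anticipate a real obstacle here; the only subtle point is the almost-sure uniqueness in step one, which is routine.
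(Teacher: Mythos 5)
Your proposal is correct and is essentially the paper's argument: the paper likewise obtains the density as the void probability $\exp(-\lambda A_K(p,\theta))$ of the cap multiplied by the intensity $\lambda\,dp$, with your Slivnyak--Mecke step simply making that heuristic rigorous. Your Fubini computation of the total mass is the same calculation as the paper's alternative derivation (differentiating $\exp(-\lambda A_K(p,\theta))$ in the direction orthogonal to $\theta$), so nothing further is needed.
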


\begin{proof}
This follows directly from the definition of a Poisson point process.  The probability that no point lands in $C_K(p,\theta)$ is $\exp(-\lambda A_K(p,\theta))$, 
and we multiply this by $\lambda\,dp$, which is the density of the Poisson point process.

Alternatively, we may differentiate $\exp(-\lambda A_K(p,\theta))$ with respect to the direction orthogonal to $\theta$ and divide by the length of $\partial H_\theta\cap K$.  
This also yields $\lambda\exp(-\lambda A_K(p,\theta))\,dp$.
\end{proof}

\begin{definition}
We define the function $f_K(x,\theta):[0,1]\times\mathbb R/2\pi\to\mathbb R$ as follows:
\begin{equation}
f_K(x,\theta)=\begin{cases}\text{length of $(\partial H_\theta)\cap K$ where $C_K(\log\frac 1x,\theta)=H_\theta\cap K$}
\cr\hspace{2.2in}\text{if }x>\exp(-\Area(K))\cr 0\hfill\text{if }x\leq\exp(-\Area(K))\end{cases}
\end{equation}
\end{definition}

It will be important to have the following bound on the growth of $f$:

\begin{lemma}\label{boundonF}
If $y\leq x$, then:
\begin{equation}
\frac{f(y)}{\sqrt{-\log y}}\leq\frac{f(x)}{\sqrt{-\log x}}
\end{equation}
\end{lemma}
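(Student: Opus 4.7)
My plan is to reparametrize by the cap area. Setting $r = -\log x$ and $\ell(r):=f_K(x,\theta)$, the inequality $f(y)/\sqrt{-\log y}\leq f(x)/\sqrt{-\log x}$ for $y\leq x$ is equivalent to the statement that $r\mapsto \ell(r)/\sqrt{r}$ is non-increasing on $(0,\Area(K))$, since $y\leq x$ corresponds to $r_y\geq r_x$. The cases $x,y\leq e^{-\Area(K)}$ are trivial because $f$ vanishes there.

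To analyze $\ell(r)$, I fix coordinates so that $\theta$ points upward, and let $w(h)$ denote the length of the horizontal slice of $K$ at height $h$. If $\partial H_\theta$ sits at height $h_0$, then $\ell(r)=w(h_0)$ and $r=\int_{h_0}^{h_{\max}} w(h)\,dh$. Convexity of $K$ makes $w$ concave on its support, so the right-derivative $w'(h_0^+)\leq 0$ exists and
\[
w(h) \leq w(h_0) + w'(h_0^+)(h - h_0) \qquad (h \geq h_0).
\]
The linear upper bound hits zero at $h_* = h_0 - w(h_0)/w'(h_0^+)$, and since $w\geq 0$ we must have $h_{\max}\leq h_*$. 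Integrating then yields the ``tangent triangle'' estimate
\[
r \;\leq\; \tfrac12\, w(h_0)(h_* - h_0) \;=\; -\frac{w(h_0)^2}{2\, w'(h_0^+)},
\]
equivalently $2r\,|w'(h_0^+)| \leq w(h_0)^2$.

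Using $d(\ell^2)/dh_0 = 2 w(h_0) w'(h_0^+)$ and $dr/dh_0 = -w(h_0)$, I compute
\[
\frac{d}{dh_0}\!\left(\frac{\ell^2}{r}\right) \;=\; \frac{w(h_0)\bigl(w(h_0)^2 - 2r\,|w'(h_0^+)|\bigr)}{r^2} \;\geq\; 0
\]
by the tangent-triangle estimate. Since $r$ is a decreasing function of $h_0$, this gives the monotonicity of $\ell^2/r$, and hence of $\ell/\sqrt{r}$, in $r$.

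The main technical nuisance will be smoothness: $w$ is only guaranteed to be concave, not $C^1$, so the derivative identities above should be read as right-derivative identities. I plan to handle this by working with one-sided derivatives throughout (concavity provides both the tangent-line bound and the required monotonicity of the one-sided derivative of $\ell^2/r$), or alternatively by first approximating $K$ by a nested family of strictly convex, smoothly bounded sets and taking a limit; the argument above is robust under either choice.
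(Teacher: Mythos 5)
Your overall route is the same as the paper's: slice $K$ along the direction $\theta$, reparametrize by the cap, and show that (chord length)$/\sqrt{\text{cap area}}$ is monotone, which after one differentiation in the slicing parameter comes down to the pointwise inequality $w(h_0)^2+2w'(h_0^+)\,r\geq 0$ (this is the paper's $h(\ell)^2-2h'(\ell)A(\ell)\geq 0$ in mirrored coordinates, since the paper measures $\ell$ from the tip of the cap). Where you genuinely differ is in how that inequality is established: the paper checks it at the endpoint and differentiates once more, using concavity in the form $h''\leq 0$, which handles both signs of $h'$ uniformly; you prove it in one step from the support-line property of the concave width function, integrating the tangent line to get the ``tangent triangle'' bound $2r\,|w'|\leq w^2$. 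Your version is more geometric, avoids second derivatives, and makes the regularity caveat (one-sided derivatives or smooth approximation) easier to discharge.

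However, as written there is a sign error you must repair: it is \emph{not} true that $w'(h_0^+)\leq 0$ for every $h_0$. The width function of a convex body is concave but typically increases below its widest chord; for instance, for a triangle with a vertex pointing opposite to $\theta$ one has $w'>0$ at every interior height. Once the cap is large enough to contain the widest chord, your claim fails, and with it the assertion $h_{\max}\leq h_*$ (your $h_*$ then lies below $h_0$) and the displayed identity $\frac{d}{dh_0}\left(\ell^2/r\right)=w\left(w^2-2r|w'|\right)/r^2$, whose correct general form is $w\left(w^2+2rw'\right)/r^2$. The gap is harmless because in that regime the needed inequality $w^2+2rw'\geq 0$ is trivial, both terms being nonnegative. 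So split into two cases: if $w'(h_0^+)\geq 0$ the $h_0$-derivative of $\ell^2/r$ is manifestly nonnegative, and if $w'(h_0^+)<0$ your tangent-triangle estimate applies verbatim. With that one-line fix (and the minor remark that only $y\leq e^{-\Area(K)}$, not both $x$ and $y$, is needed for the degenerate case), your proof is complete.
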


The bound above is sharp, for instance $f(x)=\operatorname{const}\cdot\sqrt{-\log x}$ for $K=\{x,y\geq 0\}$ (i.e.\ the first quadrant).

\begin{figure}[htb]
\centering
\includegraphics{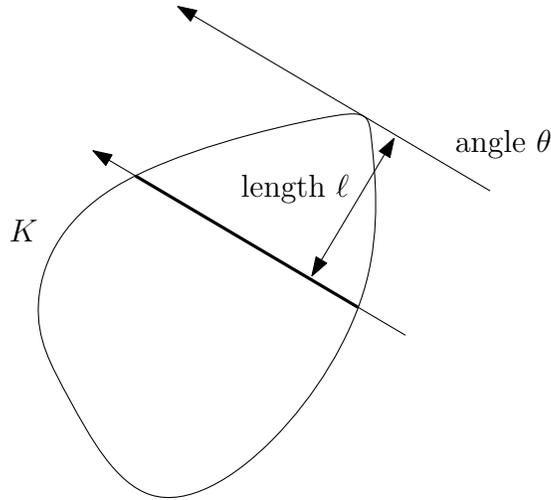}
\caption{Illustration of the function $h$.}\label{figureH}
\end{figure}

\begin{proof}
Project $K$ along the lines at angle $\theta$ to get a height function $h:[0,\infty)\to\mathbb R_{\geq 0}$; 
in Figure \ref{figureH}, $h(\ell)$ is the length of the thick segment.  Now if 
$A(\ell)=\int_0^\ell h(\ell')\,d\ell'$ then $f(\exp(-A(\ell)))=h(\ell)$.  Thus 
we see that it suffices to show that the function:
\begin{equation}
\frac{h(\ell)}{\sqrt{A(\ell)}}
\end{equation}
is decreasing.  Differentiating with respect to $\ell$, we see that it suffices to show that:
\begin{equation}
h(\ell)^2-2h'(\ell)A(\ell)\geq 0
\end{equation}
For $\ell=0$, the left hand side is clearly nonnegative, and the derivative of the 
left hand side equals $-2h''(\ell)A(\ell)$, which is $\geq 0$ by concavity of $h$.
\end{proof}

When proving central limit theorems, it is important to decompose $N$ and $A$ into local pieces.  Thus we define $N(\alpha,\beta)$ to equal the 
number of edges with angle in the interval $(\alpha,\beta)$.  Then it is easy to see that:
\begin{equation}
N=N(\alpha_1,\alpha_2)+N(\alpha_2,\alpha_2)+\cdots+N(\alpha_L,\alpha_1)
\end{equation}
A similar decomposition is valid for $A$, where $A(\alpha,\beta)$ is best explained graphically in Figure \ref{Adecomp}.

\begin{figure}[htb]
\centering
\includegraphics{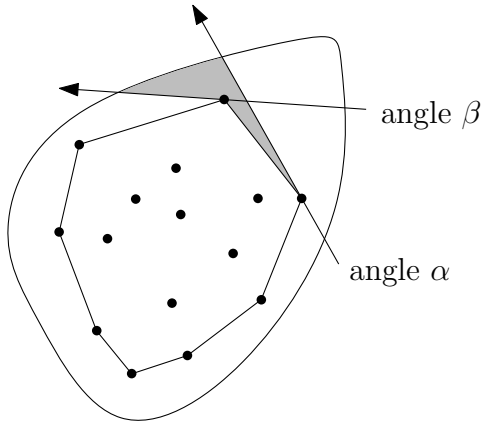}
\caption{Illustration of $A(\alpha,\beta)$}\label{Adecomp}
\end{figure}

Consider for the moment the Poisson model, and let $X$ denote $N$ or $A$.  In \cite{pardonaop}, it is shown that if one chooses 
the partition so that each interval $[\alpha_i,\alpha_{i+1}]$ has constant \emph{affine invariant measure} (a notion from \cite{pardonaop} 
which will not concern us here), then $X(\alpha_i,\alpha_{i+1})$ has constant expectation and variance, and the correlation between 
$X(\alpha_i,\alpha_{i+1})$ and $X(\alpha_j,\alpha_{j+1})$ is exponentially decreasing in $|i-j|$ (specifically, an $\alpha$-mixing 
estimate is proved).  From these facts, along with a general lower bound on the variance of $X$ due to B\'ar\'any and Reitzner 
\cite{baranypreprint} (their result also holds in higher dimensions), it follows on general principles that a central limit 
theorem holds for $X$ in the Poisson model.

\section{Proofs}

Let us begin by stating the lemmas which we will prove.

\begin{lemma}\label{punifsame}
As $n\to\infty$, we have:
\begin{equation}
\sup_x\left|P(A(\Pi_{K,n})\leq x)-P(A(P_{K,n})\leq x)\right|\to 0
\end{equation}
\begin{equation}
\sup_x\left|P(N(\Pi_{K,n})\leq x)-P(N(P_{K,n})\leq x)\right|\to 0
\end{equation}
uniformly over all convex $K$ of unit area.
\end{lemma}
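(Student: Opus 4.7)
My plan is to couple the Poisson and uniform models via Poissonization, reduce the Kolmogorov closeness to a matching of the first two moments of $X\in\{N,A\}$ in the two models, and then carry out this matching by writing down integral-geometric expressions for the moments whose pointwise difference is the quantitative content of $(1+\alpha/n)^n\to e^\alpha$.

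First I would place both models on a common probability space: sample an i.i.d.\ sequence $X_1,X_2,\ldots$ uniform in $K$ together with an independent $M\sim\mathrm{Poisson}(n)$, and set $P_{K,n}=\operatorname{conv}(X_1,\ldots,X_n)$ and $\Pi_{K,n}=\operatorname{conv}(X_1,\ldots,X_M)$. Under this coupling $\Pi_{K,n}=P_{K,M}$ with $M=n+O_P(\sqrt n)$. Adapting the Poissonization argument from \cite{vu1}, the Kolmogorov comparison between $X(\Pi_{K,n})$ and $X(P_{K,n})$ then reduces to showing that $\mathbb E[X(\Pi_{K,n})]=\mathbb E[X(P_{K,n})]+o(\sigma_n)$ and $\operatorname{Var}(X(\Pi_{K,n}))=\operatorname{Var}(X(P_{K,n}))+o(\sigma_n^2)$, where $\sigma_n^2=\operatorname{Var}(X(\Pi_{K,n}))$; the fact that the sensitivity of $X$ to one added point is small turns the $\sqrt n$-scale fluctuation of $M$ around $n$ into a negligible distributional perturbation once the moments of the two models already agree to this accuracy.

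The moment matching itself is where the integral geometry enters. Using Lemma~\ref{Wdistr} on the Poisson side and the analogous one-point cap-counting identity on the uniform side, one writes $\mathbb E[N]$ and $\mathbb E[A]$ in both models as integrals of a common geometric kernel (built from $f_K$) against the probability that a specified cap is empty: this probability is $e^{-nA_K(p,\theta)}$ in the Poisson model and $(1-A_K(p,\theta))^{n-1}$ in the uniform model. The variances become analogous two-point integrals, with the exponent $n-2$ in the uniform case and with the single cap replaced by the union $C_K(p,\theta)\cup C_K(p',\theta')$. At each point the difference between the two factors is bounded by $|(1-A)^{n-1}-e^{-nA}|\ll nA^2 e^{-nA}$, the explicit error term in $(1+\alpha/n)^n\to e^\alpha$ at $\alpha=-nA$. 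Integrating this pointwise estimate against the geometric kernel and using Lemma~\ref{boundonF} to localize the effective range of integration to caps of area $O(\log n/n)$ yields the required $o(\sigma_n)$ and $o(\sigma_n^2)$ bounds, simultaneously for $N$ and $A$.

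The main obstacle should be the variance comparison: the two-point integrals involve the area of $C_K(p,\theta)\cup C_K(p',\theta')$, which can be much larger than either constituent cap, so one must verify that configurations with large cap-union area contribute negligibly to both expectations, and not merely that the pointwise density difference is small there. A related subtlety, and the one to which Lemma~\ref{boundonF} is tailored, is uniformity in $K$: every estimate must be expressed purely through $A_K$ and $f_K$, so that the growth bound of Lemma~\ref{boundonF} can serve as the sole geometric input and no further hypothesis on $\partial K$ is invoked.
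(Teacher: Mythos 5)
Your central reduction does not work as stated: knowing that $\mathbb E[X(\Pi_{K,n})]-\mathbb E[X(P_{K,n})]=o(\sigma_n)$ and $\Var X(\Pi_{K,n})-\Var X(P_{K,n})=o(\sigma_n^2)$ says nothing by itself about the Kolmogorov distance $\sup_x|P(X(\Pi_{K,n})\leq x)-P(X(P_{K,n})\leq x)|$ --- two sequences of random variables can have identical means and variances and yet have distributions that stay a fixed Kolmogorov distance apart. Matching of the first two moments is exactly the content of Lemmas \ref{expectsame} and \ref{varsame}, and in the structure of this paper those lemmas are \emph{combined with} Lemma \ref{punifsame} (plus Theorem \ref{strongclt}) to get Corollary \ref{uniformclt}; they cannot be used to \emph{prove} Lemma \ref{punifsame}, on pain of circularity. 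If you want to salvage your coupling ($P_{K,n}=\operatorname{conv}(X_1,\ldots,X_n)$, $\Pi_{K,n}=\operatorname{conv}(X_1,\ldots,X_M)$ with $M\sim\mathrm{Poisson}(n)$), the two ingredients you would actually need are: (i) a high-probability bound $|X(\Pi_{K,n})-X(P_{K,n})|=o(\sqrt{\Var X(\Pi_{K,n})})$ under the coupling, uniformly over all convex $K$ of unit area --- this is the genuinely hard ``sensitivity to $O(\sqrt n)$ added or deleted points'' estimate, which you mention only in passing and do not prove (for $X=N$ it is the delicate point in \cite{vu1}, and uniformity in $K$ is an added difficulty since $\Var N$ ranges from $\asymp\log n$ for polygons to $\asymp n^{1/3}$ for smooth bodies); and (ii) an anti-concentration (L\'evy concentration function) bound for $X(\Pi_{K,n})$ at that scale, which would have to be imported from Theorem \ref{strongclt}, and which you never invoke. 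Without (ii) even a perfect version of (i) does not convert the coupling into a bound on the Kolmogorov distance of the un-normalized variables.

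For comparison, the paper's proof of Lemma \ref{punifsame} involves no moments and no CLT at all: fix $\epsilon>0$, let $S_\epsilon$ be the union of all caps of area $\epsilon$, restrict to the events $E_\Pi$, $E_P$ that the hull boundary lies in $S_\epsilon$ (these have probability $\to1$ uniformly in $K$), and observe that, conditioned on the number $k$ of sample points falling in $S_\epsilon$, the quantities $P(X\leq x\,\&\,E\,|\,k)$ agree in the two models; the difference of the unconditioned probabilities is then controlled by the total variation distance between $\mathrm{Poisson}(np)$ and $\mathrm{Binomial}(n,p)$ with $p=\Area(S_\epsilon)$, which is at most $2p$ by Vervaat's bound \cite{vervaat}, and $\epsilon$ is arbitrary. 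Your integral-geometric machinery (empty-cap probabilities $e^{-nA}$ versus $(1-A)^{n-1}$, the estimate $|(1-A)^{n-1}-e^{-nA}|\ll nA^2e^{-nA}$, and Lemma \ref{boundonF}) is the right toolkit, but for Lemmas \ref{expectsame} and \ref{varsame}, not for this statement.
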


\begin{lemma}\label{expectsame}
As $n\to\infty$, we have:
\begin{equation}
\left|\mathbb E[N(\Pi_{K,n})]-\mathbb E[N(P_{K,n})]\right|=o(\sqrt{\Var N(\Pi_{K,n})})
\end{equation}
\begin{equation}\label{vuproved}
\left|\mathbb E[A(\Pi_{K,n})]-\mathbb E[A(P_{K,n})]\right|=o(\sqrt{\Var A(\Pi_{K,n})})
\end{equation}
uniformly over all convex $K$ of unit area.
\end{lemma}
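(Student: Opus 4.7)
The plan is to reduce the two estimates to the elementary comparison of $e^{-na}$ and $(1-a)^n$ for $a\in[0,1]$, by writing $\mathbb E[N]$ and $\mathbb E[A]$ in both models as explicit integral geometric expressions whose integrands differ only in how the emptiness of a cap of area $a$ is weighted: the Poisson model uses $e^{-na}$, while the uniform model uses $(1-a)^m$ for an appropriate exponent $m\in\{n-1,n-2\}$.

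First, I would use Efron-type identities to reduce both halves of Lemma~\ref{expectsame} to a single comparison of $\mathbb E[A(\Pi_{K,n})]$ and $\mathbb E[A(P_{K,n})]$. Slivnyak's theorem yields $\mathbb E[N(\Pi_{K,n})]=n\,\mathbb E[A(\Pi_{K,n})]$ in the Poisson model, and the classical Efron identity gives $\mathbb E[N(P_{K,n+1})]=(n+1)\,\mathbb E[A(P_{K,n})]$ in the uniform model. The small index shift in the uniform identity introduces only a lower-order error, controllable using Theorem~\ref{strongvarest}.

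Next, I would write $\mathbb E[A]$ in both models as a double integral over ordered pairs $(p,q)\in K\times K$ that represent potential edges of the hull. For each such pair, the chord through $p$ and $q$ cuts $K$ into two caps, and $(p,q)$ is an edge precisely when one of the caps contains none of the remaining points of the process; that empty cap is then the moon contributing to $A$. The Poisson emptiness probability is $e^{-nA}$, while the uniform one is $(1-A)^{n-2}$, where $A$ is the cap area. A Blaschke--Petkantschin-style change of variables from $(p,q)$ to (chord direction $\theta$, cap area $a$, positions along the chord) collapses both expressions into iterated integrals over $(\theta,a)$ weighted by $f_K(e^{-a},\theta)^2$, the square of the chord length. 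The pointwise difference of the integrands is then controlled by
\[
\bigl|e^{-na}-(1-a)^{n-2}\bigr|\ll e^{-na}(a+na^2),
\]
an elementary consequence of Taylor expanding $\log(1-a)$, or equivalently of $(1+\alpha/n)^n\to e^{\alpha}$.

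Integrating this bound against the $f_K^2$ weight using the $K$-uniform growth control from Lemma~\ref{boundonF}, and comparing with $\sqrt{\Var N(\Pi_{K,n})}\asymp\sqrt{\mathbb E[N(\Pi_{K,n})]}$ from Theorem~\ref{strongvarest}, then yields the claim. The main obstacle is obtaining the estimate uniformly in $K$: for near-polygonal $K$, $\mathbb E[N]$ can be as small as $\log n$, and the resulting target $o(\sqrt{\log n})$ demands sharp use of Lemma~\ref{boundonF} together with careful tracking of the dominant $a\sim 1/n$ region, where the bulk of both integrals is concentrated and where the relative error between $e^{-na}$ and $(1-a)^{n-2}$ is of order $1/n$.
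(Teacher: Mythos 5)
Your overall strategy (exact integral representations in both models, with the only difference being the weight $e^{-na}$ versus $(1-a)^{n-2}$, compared pointwise via $(1+\alpha/n)^n\to e^\alpha$) is the same in spirit as the paper's proof, and your pointwise bound $|e^{-na}-(1-a)^{n-2}|\ll e^{-na}(a+na^2)$ together with a cut-off at $a\gg(\log n)/n$ controlled by Lemma \ref{boundonF} would indeed deliver a relative error of order $n^{-1+\epsilon}$ between the two expectations, which is more than enough. However, there is a genuine gap in the identity on which your whole argument rests. You claim that when $(p,q)$ is an edge, ``that empty cap is then the moon contributing to $A$,'' so that $A(\mathcal P)$ equals the sum over edges of the areas of the empty caps, giving $\mathbb E[A(P_{K,n})]=\binom n2\int\!\!\int\bigl[A_+(1-A_+)^{n-2}+A_-(1-A_-)^{n-2}\bigr]\,dp\,dq$ and its Poisson analogue. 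This identity is false: the caps cut off by the supporting lines of consecutive edges overlap (beyond each vertex there is a wedge of $K$ lying outside both adjacent edge lines), so $\sum_{\text{edges}}\Area(\text{cap})>A(\mathcal P)$ strictly in general --- already visible for three points, where the three caps cover $K\setminus\mathcal P$ with nonempty pairwise overlaps. Your double integral therefore computes $\mathbb E\bigl[\sum_e\Area(\text{cap}_e)\bigr]$, not $\mathbb E[A]$, and since you route \emph{both} halves of the lemma through this representation (the $N$ half via Efron), both halves are affected. To repair it you must either control the expected overlap uniformly in $K$ at precision $o(\sqrt{\Var A(\Pi)})$ (nontrivial and not addressed), or replace the representation by an exact one: e.g.\ run the comparison on $\mathbb E[N]$, where the R\'enyi--Sulanke pair formula \emph{is} exact, and transfer to $A$ by Efron in both models, or use an exact local decomposition of $K\setminus\mathcal P$ indexed by tangent direction, which is precisely what the paper does via $W(\theta)$ and the coordinates $x=\exp(-A(p,\theta))$, $y$.

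Two smaller points. First, the ``main obstacle'' you identify is backwards: since your comparison is multiplicative, the required relative error is $o\bigl(1/\sqrt{\mathbb E[N]}\bigr)$, which is \emph{weakest} when $\mathbb E[N]\asymp\log n$ (near-polygonal $K$) and most demanding when $\mathbb E[N]\asymp n^{1/3}$; in any case a uniform relative error $O(n^{-1+\epsilon})$, or even $O(n^{-1/2-\delta})$, settles all $K$ at once, exactly as in the paper, so no ``sharp'' tracking of the $a\sim1/n$ region is needed. Second, the claim that the index shift $P_{K,n}\mapsto P_{K,n-1}$ coming from Efron is ``controllable using Theorem \ref{strongvarest}'' is not justified as stated, since that theorem concerns the Poisson model; the clean way to absorb the shift is simply to carry out the integrand comparison with exponent $n-1$ in place of $n-2$, which your elementary estimate handles with no change.
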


\begin{lemma}\label{varsame}
As $n\to\infty$, we have:
\begin{equation}
\Var N(\Pi_{K,n})\sim\Var N(P_{K,n})
\end{equation}
\begin{equation}
\Var A(\Pi_{K,n})\sim\Var A(P_{K,n})
\end{equation}
uniformly over all convex $K$ of unit area.
\end{lemma}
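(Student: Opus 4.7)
The plan is to derive exact integral-geometric expressions for $\Var N$ and $\Var A$ in both the Poisson and uniform models, and then compare them termwise. Lemma~\ref{Wdistr} gives the density of $W_{\Pi_{K,n}}(\theta)$ as $n\exp(-nA_K(p,\theta))\,dp$; an analogous calculation for the uniform model replaces $n\exp(-nA_K(p,\theta))$ with $n(1-A_K(p,\theta))^{n-1}$, since the latter is the probability density that a specified uniform point lies at $p$ and is extremal in direction $\theta$. Together with the two-point versions---where the relevant cap becomes $C_1\cup C_2$ and the weight is $n^2 e^{-n\Area(C_1\cup C_2)}$ in the Poisson case and $n(n-1)(1-\Area(C_1\cup C_2))^{n-2}$ in the uniform case---these yield explicit formulas for $\mathbb E[N]$, $\mathbb E[N^2]$, and hence $\Var N$, in each model. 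The same approach, using the decomposition of Figure~\ref{Adecomp} to express $A$ as an integral over caps, handles the area variable in parallel.

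With these formulas in hand, I would bound the pointwise differences between the Poisson and uniform weights. Expanding $(1-A)^{n-1}=\exp\bigl((n-1)\log(1-A)\bigr)$ and Taylor expanding $\log(1-A)$ gives
\begin{equation}
\bigl|n(1-A)^{n-1}-ne^{-nA}\bigr|\ll (nA+n^2A^2)\,e^{-nA/2}
\end{equation}
for all $A\in[0,1]$ and all $n$ sufficiently large, with an analogous bound for the two-cap weights using $\Area(C_1\cup C_2)$ in place of $A$. Integrating these pointwise bounds against $dp\,d\theta$ (and $dp_1\,dp_2\,d\theta_1\,d\theta_2$ in the two-point case), and using Lemma~\ref{boundonF} to control how the integration decomposes by cap area, should yield a total error that is $o(\mathbb E[N])$. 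Since Theorem~\ref{strongvarest} tells us $\Var N(\Pi_{K,n})\asymp\mathbb E[N]\to\infty$ uniformly in $K$, dividing through gives $\Var N(\Pi_{K,n})\sim\Var N(P_{K,n})$.

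The main obstacle I expect is the two-point comparison. In $\Var N$ there is a subtractive cancellation between the pair-density integral and the product of two single-density integrals, and both the Poisson and uniform factorization properties break down near the diagonal where the two caps overlap substantially. Splitting the pair integral into a \emph{well-separated} region, where the densities nearly factorize and the pointwise error above suffices, and a \emph{nearly-diagonal} region, where one instead uses a coarser uniform estimate on the combined mass, is where most of the technical work will go. Once this bookkeeping is complete, the proof for $A$ in place of $N$ follows by an essentially identical argument, since the area decomposition of Figure~\ref{Adecomp} rewrites $A(\alpha,\beta)$ as a comparable integral over caps and no new ideas are required beyond carrying an extra weight factor through the same estimates.
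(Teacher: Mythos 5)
Your overall framework is the paper's: write integral-geometric formulas for the first and second moments in both models (one- and two-point densities expressed through cap areas, $e^{-n\Area}$ versus $(1-\Area)^{n-\cdot}$), cut off large caps using Lemma \ref{boundonF}, compare the weights pointwise via the $(1-\tfrac\alpha n)^n\to e^{-\alpha}$ type expansion, and finish by dividing by a variance lower bound. The genuine gap is in the sentence ``\dots should yield a total error that is $o(\mathbb E[N])$.'' The two-point integral you are perturbing has size $\mathbb E[N^2]\asymp\mathbb E[N]^2$, which is enormously larger than $\Var N\asymp\mathbb E[N]$; a pointwise comparison of the weights of relative quality $O((\log n)^2/n)$ on the region where the union-cap area is $\lesssim\frac{\log n}{n}$ (in your unit-area normalization) can therefore only give an absolute error of order $\mathbb E[N]^2\cdot n^{-1+\epsilon}$, and nothing you have written explains why this is $o(\Var N)$. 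The missing ingredient is exactly the paper's closing observation: besides the lower bound $\Var X(\Pi)\gg\mathbb E[X(\Pi)]$ (which you do get from Theorem \ref{strongvarest}, applied to the Poisson model), one needs the upper bound $\mathbb E[X(\Pi)]\ll n^{1/3}$ (any bound $o(n^{1-\epsilon})$ suffices), so that $\mathbb E[X]^2n^{-1+\epsilon}=o(\mathbb E[X])=o(\Var X)$. Without invoking that upper bound, your claimed error estimate does not follow from the pointwise bounds you propose. (Also, discarding the large-cap region in the \emph{second}-moment integral needs an extra crude bound such as $X\leq n+2$ to control the second factor of $X$ there; this is easy but should be said.)

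Relatedly, the ``main obstacle'' you anticipate --- exploiting cancellation between the pair integral and the product of singles, with a separate treatment of the nearly-diagonal region --- is not needed once the above observation is in place, and the paper never does it: one compares $\mathbb E[X^2]$ itself between the two models with relative error $O(n^{-1+\epsilon})$, phrasing everything in terms of $A(p,q,\theta,\psi)=\Area(C(p,\theta)\cup C(q,\psi))$, which handles overlapping caps and even the singular diagonal components of the measures uniformly; the oversized main term $\mathbb E[X]^2$ is then absorbed at the very end by $\mathbb E[X]\ll n^{1/3}$ and $\Var X\gg\mathbb E[X]$. Conversely, if you try to avoid the $n^{1/3}$ input by genuinely tracking the cancellation (i.e.\ comparing covariance densities near and off the diagonal so as to beat the $\mathbb E[N]^2/n$ barrier), you are taking on a substantially harder program, and the key estimates for it --- how the factorization errors and the diagonal contributions compare to $\Var N$ itself --- are not supplied in your sketch. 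As written, the proposal is the paper's argument minus its decisive final step, plus an unnecessary (and unsubstantiated) detour around a difficulty that the paper's arrangement of the argument avoids entirely.
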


Lemma \ref{punifsame} says essentially that as $n\to\infty$, the functionals of $P_{K,n}$ and $\Pi_{K,n}$ have asymptotically 
the same distributions.  Lemmas \ref{expectsame} and \ref{varsame} give us equivalence of the expectation and variance in the 
two models.  It is elementary to observe that these lemmas combine to give Corollary \ref{uniformclt} (and also Corollary \ref{uniformvarest}).  
We note that Van Vu has observed (\ref{vuproved}) in \cite[p224, Proposition 3.1 and Remark 3.5]{vu1}; our proof is different.

\subsection{Proof of Lemma \ref{punifsame}}

\begin{proof}
We follow and slightly correct an argument of Reitzner \cite[p492--3]{reitzner1}.  We thank the referee for asking us to clarify 
our use of Reitzner's argument, since it was this that led us to realize the error.  The argument below becomes fallacious if we 
write $P(X\leq x|E)$ (as Reitzner does) everywhere we have $P(X\leq x\,\&\,E)$.  The problem is that the the first equality in 
equation (\ref{est}) (which is Reitzner's equation (13)) is false in this case (note our $E$ is Reitzner's $A$).  Thus the proof in \cite{reitzner1} is typographically 
very close to being correct; there is no problem once we replace every $P(X\leq x|E)$ with $P(X\leq x\,\&\,E)$.

First, fix $\epsilon>0$ and let $S_\epsilon=\bigcup_{\theta\in\mathbb R/2\pi}C(\theta,\epsilon)$ 
be the union of all caps of area $\epsilon$.  Let $E_P$ and $E_\Pi$ be the events that $\partial P$ and $\partial\Pi$ are completely 
contained in $S_\epsilon$ respectively.  Now trivially, we have that $P(E_P),P(E_\Pi)\to 1$ as $n\to\infty$, uniformly over all $K$ of unit area.

If $B$ is any event, then $|P(B)-P(B\,\&\,E)|\leq 1-P(E)$.  Letting $B$ be $X\leq x$, we have:
\begin{equation}\label{tri1}
\sup_K\sup_x|P(X(\Pi_{K,n})\leq x)-P(X(\Pi_{K,n})\leq x\,\&\,E_\Pi)|\to 0
\end{equation}
\begin{equation}\label{tri2}
\sup_K\sup_x|P(X(P_{K,n})\leq x)-P(X(P_{K,n})\leq x\,\&\,E_P)|\to 0
\end{equation}
as $n\to\infty$, where $X$ denotes either $N$ or $A$.

Now consider $P(X(\Pi_{K,n})\leq x\,\&\,E_\Pi)$ and $P(X(P_{K,n})\leq x\,\&\,E_P)$.  Observe that if we condition 
both probabilities on the number of points of the process in $S_\epsilon$, then they become equal.  Let us call 
this probability $P(X\leq x\,\&\,E|k)$.  In other words, suppose we place $k$ points uniformly at random in $S_\epsilon$.  
Then $P(X\leq x\,\&\,E|k)$ is defined to equal the probability that the boundary of their convex hull is contained 
in $S_\epsilon$ and $X\leq x$.  Thus setting $p=\Area(S_\epsilon)$, we find:
\begin{multline}\label{est}
\left|P(X(\Pi_{K,n})\leq x\,\&\,E_\Pi)-P(X(P_{K,n})\leq x\,\&\,E_P)\right|\cr
=\left|\sum_{k=0}^\infty\frac{(np)^k}{k!}e^{-np}P(X\leq x\,\&\,E|k)-\binom nkp^k(1-p)^{n-k}P(X\leq x\,\&\,E|k)\right|\cr
\leq\sum_{k=0}^\infty\left|\frac{(np)^k}{k!}e^{-np}-\binom nkp^k(1-p)^{n-k}\right|\leq 2p
\end{multline}
where $\binom nk=0$ if $k>n$.  The last bound is due to Vervaat \cite{vervaat}.  Combining (\ref{tri1}) and (\ref{tri2}) with (\ref{est}), we find that:
\begin{equation}
\limsup_{n\to\infty}\sup_K\sup_x|P(X(\Pi_{K,n})\leq x)-P(X(P_{K,n})\leq x)|\leq 2\sup_K\Area(S_\epsilon)
\end{equation}
But we may choose $\epsilon>0$ arbitrarily, so we are done.
\end{proof}

It may indeed be possible to take a similar strategy to prove Lemmas \ref{expectsame} and \ref{varsame}.  However, in this case 
bounding the sum in equation (\ref{est}) becomes harder, since we have expectations instead of probabilities, and the former are 
not bounded by $1$.  Also, proving analogues of equations (\ref{tri1}) and (\ref{tri2}) becomes nontrivial.  Since we need 
good estimates for Lemmas \ref{expectsame} and \ref{varsame}, choosing $\epsilon$ correctly as a function of $n$ and estimating 
$P(E_\Pi)$ and $P(E_P)$ becomes an issue.

\subsection{Proofs of Lemmas \ref{expectsame} and \ref{varsame}}

The proofs of Lemmas \ref{expectsame} and \ref{varsame} will make use of some simple integral geometric expressions for the 
expectations and variances in question.  The derivation of these expressions is completely elementary.  The integrals appear 
complicated, though the point is not their exact form, but rather that they are almost identical for $P_K$ and $\Pi_K$.  With 
the appropriate integrals in hand, the desired convergence essentially reduces to the fact that $(1+\frac\alpha n)^n\to e^\alpha$ 
as $n\to\infty$.  So, before, we begin the proofs, we make some elementary observations about this convergence.  If $e^{-n}<x\leq 1$, 
then $0<1+\frac{\log x}n\leq 1$, so:
\begin{equation}\label{expsmallerconv}
n\log\left(1+\frac{\log x}n\right)\leq n\frac{\log x}n=\log x\implies\frac 1x\left(1+\frac{\log x}n\right)^n\leq 1
\end{equation}
If additionally it holds that $(\log x)^2\leq\frac n2$, then:
\begin{multline}\label{expconvergence}
n\log\left(1+\frac{\log x}n\right)=n\left(\frac{\log x}n+O\left(\frac{(\log x)^2}{n^2}\right)\right)=\log x+O\left(\frac{(\log x)^2}n\right)\cr
\implies\frac 1x\left(1+\frac{\log x}n\right)^n=1+O\left(\frac{(\log x)^2}n\right)
\end{multline}

For the proofs of Lemmas \ref{expectsame} and \ref{varsame}, it is most convenient to use the normalization $\Area(K)=n$ and $\lambda=1$ 
(breaking from our previous convention).  Thus $n$ will be a positive integer, $K$ will have area $n$, and we let $\Pi_K=\Pi_{K,1}$ 
and $P_K=P_{K,n}$.

\begin{proof}[Proof of Lemma \ref{expectsame}]
Let $X$ denote either $N$ or $A$.  In the derivation of the integral geometric expressions, we treat $\Pi_K$ and $P_K$ simultaneously.

The following formula is tautological:
\begin{align}\label{basicExp}
\mathbb E[X]&=\int_{\mathbb R/2\pi}\int_KI_X(p,\theta)\,d\theta\\
I_X(p,\theta)&=\left.\frac d{dh}\mathbb E[X(\theta,\theta+h)|W(\theta)=p]\right|_{h=0}\,dP(W(\theta)=p)
\end{align}
Now let us derive expressions for $I_X(p,\theta)$ for the uniform and Poisson models respectively.  It will 
be convenient to let $y_{\theta,p}$ equal $f(p,\theta)^{-1}$ times the distance from $p$ to $\partial K$ 
in the positive $\theta$ direction.  For every angle $\theta$, the coordinates $x_{p,\theta}:=\exp(-A(p,\theta))$ and $y_{p,\theta}$ 
give a bijection between $K$ and $[e^{-n},1]\times[0,1]$.  It will prove very useful to express points in $K$ 
in terms of these coordinates, mostly because $dP(W_{\Pi}(\theta)=p)=\exp(-A(p,\theta))\,dp=dx\,dy$.

First, let us observe that $\left.\frac d{dh}\mathbb E[X(\theta,\theta+h)|W(\theta)=p]\right|_{h=0}$ is equal to:
\begin{align}\label{incremental1}
\text{For $\Pi_K$ and $X=N$:}\qquad&\frac 12y_{p,\theta}^2f(p,\theta)^2\\
\text{For $P_K$ and $X=N$:}\qquad&\frac 12y_{p,\theta}^2f(p,\theta)^2\frac{n-1}{n-A(p,\theta)}\\
\text{For $\Pi_K$ and $X=A$:}\qquad&\frac 12y_{p,\theta}^2f(p,\theta)^2\\\label{incremental2}
\text{For $P_K$ and $X=A$:}\qquad&\frac 12y_{p,\theta}^2f(p,\theta)^2
\end{align}
And we also observe that $dP(W(\theta)=p)$ equals:
\begin{align}
\text{For $\Pi_K$:}\qquad&\exp(-A(p,\theta))\,dp\\
\text{For $P_K$:}\qquad&\left(1-\frac{A(p,\theta)}n\right)^n\frac n{n-A(p,\theta)}\,dp
\end{align}

Using coordinates $x$ and $y$ in the integral (\ref{basicExp}) and substituting our expressions for $I_X(p,\theta)$, we 
observe that we can integrate out $y_{p,\theta}$ in every case.  The reader can check that the final expressions are:
\begin{align}\label{integralexpect1}
\mathbb E[X]&=\int_{\mathbb R/2\pi}\int_{e^{-n}}^1I_X(x,\theta)\,dx\,d\theta
\end{align}
where $I_X(x,\theta)$ equals:
\begin{align}\label{integralexpect2}
\text{For $\Pi_K$ and $X=N$:}\qquad&\frac 16f(p,\theta)^2\\
\text{For $P_K$ and $X=N$:}\qquad&\frac 16f(p,\theta)^2\frac 1x\left(1+\frac{\log x}n\right)^{n-2}\left(1-\frac 1n\right)\\
\text{For $\Pi_K$ and $X=A$:}\qquad&\frac 16f(p,\theta)^2\\\label{integralexpect3}
\text{For $P_K$ and $X=A$:}\qquad&\frac 16f(p,\theta)^2\frac 1x\left(1+\frac{\log x}n\right)^{n-1}
\end{align}
We now proceed to use the representations (\ref{integralexpect1}) and (\ref{integralexpect2})--(\ref{integralexpect3}) 
to show that the expectations of $X(\Pi_K)$ and $X(P_K)$ are the same up to a relative error of $O(n^{-1+\epsilon})$.

First, observe that our estimate on the growth of $f$ (Lemma \ref{boundonF}) shows that cutting off the integral (\ref{integralexpect1}) 
to $x\geq n^{-B}$ for some large fixed $B$ incurs a relative error of no more than $n^{-B+\epsilon}$.  Now for $x\in[n^{-B},1]$, we 
may use (\ref{expconvergence}) to see that the relative error incurred by replacing $\frac 1x\left(1+\frac{\log x}n\right)^n$ by 
$1$ is no more than $\frac{(\log n)^2}n$.  Observe also that for $x\in[n^{-B},1]$, we know that replacing $1+\frac{\log x}n$ with 
$1$ incurs a relative error of no more than $\frac{\log n}n$.  These operations suffice to transform between the expressions for 
$\mathbb E[X(\Pi_K)]$ and $\mathbb E[X(P_K)]$, so we have shown that they are equal up to a relative error of $O(n^{-1+\epsilon})$.

Thus to finish the proof, we just need to show that:
\begin{equation}
\mathbb E[X(\Pi)]n^{-1+\epsilon}=o(\sqrt{\Var X(\Pi)})
\end{equation}
By a result of B\'ar\'any and Reitzner \cite{baranypreprint}, $\Var X(\Pi)\gg\mathbb E[X(\Pi)]$, so it suffices to show that $\sqrt{\mathbb E[X(\Pi)]}=o(n^{1-\epsilon})$.  
It is trivial to see that $\mathbb E[X(\Pi)]\leq n$, so we are done.
\end{proof}

\begin{proof}[Proof of Lemma \ref{varsame}]
This proof follows the same outline, so we will be a little less explicit; the interested reader can write down the long integrals 
if they so desire.  Again, we let $X$ denote either $N$ or $A$.

Our plan is to show that $\mathbb E[X^2]$ is the same in the two cases up to a relative error of $O(n^{-1+\epsilon})$.

We think of $X$ as being the integral of a random measure $\mu_X$ on $\mathbb R/2\pi$.  This random measure is just 
given by the family of variables $X(\alpha,\beta)$ (explicitly, the measure of the interval $[\alpha,\beta]$ is $X(\alpha,\beta)$).  
Now $X^2$ is just the total mass of $\mu_X\otimes\mu_X$ on $(\mathbb R/2\pi)^2$.  Using linearity of expectation, we just 
need to take the $d\theta\,d\psi$ integral of the expectation of $X(\theta,\theta+d\theta)X(\psi,\psi+d\psi)$.  This 
expectation in turn, we condition on $W(\theta)$ and $W(\psi)$, writing it as an integral $dP((W(\theta),W(\psi))=(p,q))$ over $K\times K$.  
We will often implicitly use the fact that the integrand is \emph{positive}.

The first step is to show that we may remove the region where either $\exp(-A(p,\theta))<n^{-B}$ or $\exp(-A(q,\psi))<n^{-B}$ and 
incur a relative error of $O(n^{-1+\epsilon})$.  By symmetry, let us deal with the region where $\exp(-A(p,\theta))<n^{-B}$.  
Then the contribution to the total integral representing $\mathbb E[X^2]$ is just:
\begin{equation}
\int\limits_{\mathbb R/2\pi}\int\limits_{\{p\in K:A(p,\theta)\geq B\log n\}}\!\!\!\!\!\!\left.\frac d{dh}\mathbb E[X(\theta,\theta+h)\cdot X|W(\theta)=p]\right|_{h=0}\!\!\!\!\,dP(W(\theta)=p)\,d\theta
\end{equation}
Now the $X$ in the expectation contributes at most a multiplicative factor of $n+2$.  With this $X$ removed, the integral 
becomes something we already estimated in the proof of Lemma \ref{expectsame} as being $O(n^{-B+\epsilon})$.  Thus we are done.

The second step is to show that on the region where $A(p,\theta)$ and $A(q,\psi)$ are both $\leq B\log n$, the integrands 
(corresponding to $\mathbb E[X(\theta,\theta+d\theta)X(\psi,\psi+d\psi)]$ in the respective models) are 
equal up to a relative error of $O(n^{-1+\epsilon})$.  As before, this splits up into two problems:

First, we need to 
show that the probability densities $dP((W(\theta),W(\psi))=(p,q))$ in the cases of $\Pi_K$ and $P_K$ are the same up to 
a relative error of $O(n^{-1+\epsilon})$.  As before, we may express $dP((W(\theta),W(\psi))=(p,q))$ elementarily in 
terms of $A(p,q,\theta,\psi):=\Area(C(p,\theta)\cup C(q,\psi))$.  
Then the fact that this quantity is $O(\log n)$ means we may apply (\ref{expconvergence}) to see that the densities 
are equal up to a relative error of $O(n^{-1+\epsilon})$ (note that this is true even for the singular part of the 
measure $dP((W(\theta),W(\psi))=(p,q))$ occurring on the diagonal $p=q$).

Second, we need to show that the incremental expectations 
$\mathbb E[X(\theta,\theta+d\theta)X(\psi,\psi+d\psi)]$ are the same up to a relative error of $O(n^{-1+\epsilon})$.  
Again, this just involves writing equations such as (\ref{incremental1})--(\ref{incremental2}).  Then the fact that 
$A(p,q,\theta,\psi)=O(\log n)$ shows easily that they coincide up to a relative error of $O(n^{-1+\epsilon})$.  Though it presents 
no difficulty in the proof, one should note that when $X=N$, there is a singular component to the measure 
$\mathbb E[X(\theta,\theta+d\theta)X(\psi,\psi+d\psi)]$ on the diagonal $\theta=\psi$.

We have shown that:
\begin{equation}
|\mathbb E[X(\Pi_K)^2]-\mathbb E[X(P_K)^2]|\ll n^{-1+\epsilon}\mathbb E[X(\Pi_K)^2]
\end{equation}
Now $\mathbb E[X(\Pi_K)^2]=\mathbb E[X(\Pi_K)]^2+\Var X(\Pi_K)$.  Thus we have:
\begin{equation}
|\mathbb E[X(\Pi_K)^2]-\mathbb E[X(P_K)^2]|\ll n^{-1+\epsilon}\max(\mathbb E[X(\Pi_K)]^2,\Var X(\Pi_K))
\end{equation}
In the proof of Lemma \ref{expectsame}, we showed that $\mathbb E[X(\Pi_K)]$ and $\mathbb E[X(P_K)]$ are the same up to 
a relative error of $O(n^{-1+\epsilon})$.  This implies then that:
\begin{equation}
|\mathbb E[X(\Pi_K)]^2-\mathbb E[X(P_K)]^2|\ll n^{-1+\epsilon}\mathbb E[X(\Pi_K)]^2
\end{equation}
Thus it follows that:
\begin{equation}
\left|\Var X(\Pi_K)-\Var X(P_K)\right|\ll n^{-1+\epsilon}\max(\mathbb E[X(\Pi_K)]^2,\Var X(\Pi_K))
\end{equation}
Thus to finish the proof, we just need to show that $n^{-1+\epsilon}\mathbb E[X(\Pi)]^2=o(\Var[X(\Pi)])$.  By 
a result of B\'ar\'any and Reitzner \cite{baranypreprint}, $\Var X(\Pi)\gg\mathbb E[X(\Pi)]$, so it suffices to show that $\mathbb E[X(\Pi)]=o(n^{1-\epsilon})$.  
It is a well known estimate (see \cite{pardonaop}) that $\mathbb E[X(\Pi)]\ll n^{1/3}$, so we are done.
\end{proof}

\bibliographystyle{plain}
\bibliography{chull2duniform}

\begin{thebibliography}{1}

\bibitem{vuopen}
{P}roblems from the {AIM} {W}orkshop on {A}lgorithmic {C}onvex {G}eometry.
\newblock {\em Available online at \tt
  http://www.aimath.org/WWN/convexgeometry/convexgeometry.pdf}, 2007.

\bibitem{baranypreprint}
Imre B\'ar\'any and Matthias Reitzner.
\newblock On the variance of random polytopes.
\newblock {\em Adv. Math.}, 225(4):1986--2001, 2010.

\bibitem{baranypolytope}
Imre B{\'a}r{\'a}ny and Matthias Reitzner.
\newblock Poisson polytopes.
\newblock {\em Ann. Probab.}, 38(4):1507--1531, 2010.

\bibitem{pardonaop}
John Pardon.
\newblock Central limit theorems for random polygons in an arbitrary convex
  set.
\newblock {\em To appear in Annals of Probability}, 2010.

\bibitem{reitzner1}
Matthias Reitzner.
\newblock Central limit theorems for random polytopes.
\newblock {\em Probab. Theory Related Fields}, 133(4):483--507, 2005.

\bibitem{vervaat}
W.~Vervaat.
\newblock Upper bounds for the distance in total variation between the binomial
  or negative binomial and the {P}oisson distribution.
\newblock {\em Statistica Neerlandica}, 23:79--86, 1969.

\bibitem{vu1}
Van Vu.
\newblock Central limit theorems for random polytopes in a smooth convex set.
\newblock {\em Adv. Math.}, 207(1):221--243, 2006.

\end{thebibliography}

\end{document}